\newtheorem{theorem}{Theorem}
\newtheorem{lemma}[theorem]{Lemma}
\begin{document}
\onehalfspace
%\linenumbers

\title{Minimum distance-unbalancedness of trees}
\author{Marie Kramer \and Dieter Rautenbach}
\date{}

\maketitle

\begin{center}
{\small 
Institute of Optimization and Operations Research, Ulm University,\\ 
Ulm, Germany, \texttt{$\{$marie.kramer,dieter.rautenbach$\}$@uni-ulm.de}}
\end{center}

\begin{abstract}
For a graph $G$, and two distinct vertices $u$ and $v$ of $G$,
let $n_G(u,v)$ be the number of vertices of $G$ that are closer in $G$ to $u$ than to $v$.
Miklavi\v{c} and \v{S}parl (arXiv:2011.01635v1)
define the distance-unbalancedness of $G$
as the sum of $|n_G(u,v)-n_G(v,u)|$
over all unordered pairs of distinct vertices $u$ and $v$ of $G$.
Confirming one of their conjectures,
we show that the stars minimize the distance-unbalancedness
among all trees of a fixed order.\\[3mm]
{\bf Keywords:} Distance-unbalancedness; distance-balanced graph; Mostar index
\end{abstract}

\section{Introduction}
Inspired by several graph theoretical notions studied in mathematical chemistry,
and, especially, by the notions of distance-balanced graphs \cite{ha,je} 
and the Mostar index of a graph \cite{do},
Miklavi\v{c} and \v{S}parl \cite{misp2} introduced the 
distance-unbalancedness of a graph $G$.
Here, we confirm one of their conjectures from \cite{misp2}.

Before we can explain the distance-unbalancedness as well as our contribution,
we need to introduce some notation.
We consider only finite, simple, and undirected graphs.
For a graph $G$, and two vertices $u$ and $v$ of $G$, 
let ${\rm dist}_G(u,v)$ denote the {\it distance in $G$ between $u$ and $v$}, and 
let $n_G(u,v)$ be the number of vertices $w$ of $G$ 
that are closer to $u$ than to $v$,
that is, that satisfy ${\rm dist}_G(u,w)<{\rm dist}_G(v,w)$.
The {\it Mostar index} \cite{do} of $G$ is
$${\rm Mo}(G)=\sum\limits_{uv\in E(G)}|n_G(u,v)-n_G(v,u)|.$$
A graph $G$ is {\it distance-balanced} \cite{ha,je} 
if $n_G(u,v)=n_G(v,u)$ for every edge $uv$ of $G$;
or, equivalently, if ${\rm Mo}(G)=0$.
The {\it distance-unbalancedness} \cite{misp2} of $G$ is
$${\rm uB}(G)=\sum\limits_{\{ u,v\}\in {V(G)\choose 2}}|n_G(u,v)-n_G(v,u)|,$$
where ${V(G)\choose 2}$ denotes the set of 
all $2$-element subsets of the vertex set $V(G)$ of $G$,
that is, the edge set of the complete graph with vertex set $V(G)$.
A graph $G$ is {\it highly distance-balanced} \cite{misp1} 
if $n_G(u,v)=n_G(v,u)$ for every two distinct vertices $u$ and $v$ of $G$;
or, equivalently, if ${\rm uB}(G)=0$.

For a detailed discussion about the role of the above notions in mathematical chemistry,
we refer to the cited references.
In \cite{misp2} Miklavi\v{c} and \v{S}parl 
collect numerous observations concerning the distance-unbalancedness 
and pose several conjectures. 
Confirming Conjecture 4.2. from \cite{misp2}, we prove the following.

\begin{theorem}\label{theorem1}
If $T$ is a tree of order $n$, then 
$${\rm uB}(T)\geq {\rm uB}(K_{1,n-1})=(n-1)(n-2)$$
with equality if and only if 
$T$ is either a star $K_{1,n-1}$ 
or $n=4$ and $T$ is the path $P_4$.
\end{theorem}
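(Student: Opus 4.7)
The plan is to proceed by induction on $n$. The base cases $n\le 4$ are handled by direct inspection: the only trees are $K_1,K_2,P_3=K_{1,2},K_{1,3}$ and $P_4$, and a computation of the six pairs of $P_4$ yields ${\rm uB}(P_4)=6={\rm uB}(K_{1,3})$, accounting for the extra equality case at $n=4$.

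For the inductive step I will take a tree $T$ of order $n\ge 5$, pick a leaf $l$ with neighbour $u$, and set $T':=T-l$. The natural target is the leaf-removal inequality
\[
{\rm uB}(T)\;\ge\;{\rm uB}(T')+2(n-2),\qquad(\star)
\]
which combined with the inductive hypothesis ${\rm uB}(T')\ge(n-2)(n-3)$ would immediately give ${\rm uB}(T)\ge(n-1)(n-2)$. For the equality case, ${\rm uB}(T)=(n-1)(n-2)$ would then force ${\rm uB}(T')=(n-2)(n-3)$ for the chosen leaf, so by induction $T'=K_{1,n-2}$ or (only when $n=5$) $T'=P_4$; a short enumeration of the trees $T$ obtained by re-attaching a leaf to such $T'$, combined with a direct ${\rm uB}$-computation, identifies $K_{1,n-1}$ as the unique minimiser for $n\ge 5$.

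To attack $(\star)$ I will decompose
\[
{\rm uB}(T)-{\rm uB}(T')=\sum_{v\in V(T')}|n_T(l,v)-n_T(v,l)|+\sum_{\{x,y\}\in{V(T')\choose 2}}\bigl(|\Delta_T(x,y)|-|\Delta_{T'}(x,y)|\bigr),
\]
where $\Delta_H(x,y):=n_H(x,y)-n_H(y,x)$. Since $d_T(l,w)=1+d_{T'}(u,w)$ for every $w\ne l$, one obtains for each $v\in V(T')\setminus\{u\}$ the explicit identity
\[
n_T(l,v)-n_T(v,l)\;=\;1+\Delta_{T'}(u,v)-m(u,v),
\]
where $m(u,v)$ is the size of the subtree of $T'$ hanging off the vertex at position $\lfloor d_{T'}(u,v)/2\rfloor$ on the $u$-$v$ path in $T'$; the pair $\{l,u\}$ alone contributes $n-2$. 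Every term of the second sum lies in $\{-1,0,+1\}$, its sign being determined by which of $x,y$ is closer to $u$ in $T'$.

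The hard part will be that $(\star)$ does \emph{not} hold for every leaf of every tree: for instance, the balanced double broom $D_8$ (two adjacent vertices each carrying three pendant leaves) has ${\rm uB}(D_8)=54$, yet removing any of its six leaves yields a tree $T'$ with ${\rm uB}(T')=44$, so ${\rm uB}(T)-{\rm uB}(T')=10<12=2(n-2)$. The remedy is to pair $(\star)$ with the surplus of the inductive hypothesis: by picking $l$ such that $T'\ne K_{1,n-2}$ (and, when $n=5$, also $T'\ne P_4$), which is always possible for $T\ne K_{1,n-1}$, the inductive inequality ${\rm uB}(T')>(n-2)(n-3)$ becomes strict and must supply the missing slack. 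Making this rigorous will require a careful case-analysis of the structure of $T$ around $l$---distinguishing for example whether $u$ has degree two, whether $u$ carries additional leaf neighbours, or whether $u$ lies on a long pendant path---in order to control the possibly negative contribution of the second sum above by means of the explicit formula for the first.
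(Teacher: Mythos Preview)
Your proposal contains a genuine gap that you yourself locate but do not close. You observe, correctly, that the leaf-removal inequality $(\star)$ can fail for \emph{every} leaf of a tree, your balanced double broom $D_8$ being a witness: any leaf removal produces a deficit of $2$ against the target $2(n-2)=12$. Your remedy is to borrow against the ``surplus'' ${\rm uB}(T')-(n-2)(n-3)$, but the inductive hypothesis gives you no quantitative control over that surplus: knowing only that $T'$ is not a star (and not $P_4$) yields at best ${\rm uB}(T')\ge (n-2)(n-3)+1$, and $1$ does not cover the deficit of $2$ already present in the $D_8$ step. To make the scheme work you would need a structural lower bound of the form ${\rm uB}(T')\ge (n-2)(n-3)+g(T')$ with $g(T')$ provably large enough to absorb every possible shortfall in $(\star)$; that is a statement at least as strong as the theorem itself, and your proposal gives no indication of how to obtain it. The closing sentence, promising a ``careful case-analysis of the structure of $T$ around $l$,'' is not yet a plan: such an analysis cannot restore $(\star)$ (no leaf of $D_8$ satisfies it), and it cannot manufacture the missing surplus without essentially re-proving the theorem for $T'$.

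The paper's argument avoids induction on $n$ entirely. It introduces the truncated parameter
\[
{\rm uB}_2(T)=\sum_{uv\in E(T^2)}|n_T(u,v)-n_T(v,u)|,
\]
summing only over pairs at distance at most two, and proves directly that ${\rm uB}_2(T)\ge (n-1)(n-2)$: first by an explicit optimization when $T$ has a single branch vertex (a spider), and then by showing that replacing an extremal branching by a pendant path strictly decreases ${\rm uB}_2$, reducing every tree to the spider case. Since trivially ${\rm uB}(T)\ge {\rm uB}_2(T)$, the inequality follows; the equality characterisation is then a short argument using pairs at distance exactly three. The key insight---that pairs at distance one and two already carry the whole lower bound---is precisely what makes the problem tractable and is absent from your approach.
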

As the definition of distance-unbalancedness involves a summation 
over all unordered pairs of distinct vertices,
this parameter is much harder to approach 
than many other comparable parameters.
In particular, it is much more difficult to analyze the effect 
of the kind of local modifications that are usual proof techniques in this area.
Our proof relies on the insight, 
implicit in Lemma \ref{lemma1} below,
that considering all unordered pairs of vertices
of distance one or two is sufficient.

The rest of this paper is devoted to the proof of Theorem \ref{theorem1}.

\section{Proof of Theorem \ref{theorem1}}

For a graph $G$, the square $G^2$ of $G$ has the same vertex set as $G$,
and two distinct vertices of $G$ are adjacent in $G^2$ if their distance in $G$ is at most two.
For the proof of Theorem \ref{theorem1},
we consider the following auxiliary parameter
$${\rm uB}_2(G)=\sum\limits_{uv\in E(G^2)}|n_G(u,v)-n_G(v,u)|,$$
and we establish the following.
\begin{lemma}\label{lemma1}
If $T$ is a tree of order $n$, then 
${\rm uB}_2(T)\geq (n-1)(n-2)$.
\end{lemma}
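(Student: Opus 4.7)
The plan is to prove the lemma by induction on $n$. The base cases $n \leq 2$ are trivial, so assume $n \geq 3$ and that the statement holds for all trees of smaller order. Pick a leaf $v$ of $T$ (to be chosen below), let $w$ be its neighbor, and set $T' = T - v$. By the inductive hypothesis ${\rm uB}_2(T') \geq (n-2)(n-3)$, so it is enough to show
\[
{\rm uB}_2(T) - {\rm uB}_2(T') \;\geq\; (n-1)(n-2) - (n-2)(n-3) \;=\; 2(n-2).
\]

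Next I would decompose this difference into a contribution from pairs involving $v$ and a net change over pairs not involving $v$. Let $d = \deg_T(w)$, let $w_1, \dots, w_{d-1}$ be the other neighbors of $w$, and let $n_i$ be the size of the branch of $T - w$ containing $w_i$, so $\sum_i n_i = n - 2$. The pairs at distance at most two in $T$ containing $v$ are $\{v, w\}$ and the $\{v, w_i\}$, and a direct computation gives their total contribution as $(n-2) + \sum_i (n_i - 1) = 2n - 3 - d$. For a pair $\{x, y\}$ with $x, y \neq v$ at distance at most two, the observation that $n_T(x, y) = n_{T'}(x, y) + 1$ when $w$ is strictly closer to $x$ than to $y$ in $T'$ and $n_T(x, y) = n_{T'}(x, y)$ otherwise shows that the pair's contribution changes by $+1$, $-1$, or $0$, according as $w$ is strictly closer to the weakly larger side of $\{x, y\}$ in $T'$, strictly closer to the strictly smaller side, or equidistant from $x$ and $y$. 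The inductive step therefore reduces to proving that the signed count $B$ over non-$v$ pairs is at least $d - 1$.

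I would choose $v$ to be an endpoint of a longest path of $T$. A standard diameter argument then forces every neighbor of $w$, apart from one distinguished ``inner'' neighbor $w^\star$, to be a leaf of $T$; accordingly $T - w$ consists of $d - 2$ singleton branches together with one subtree $B^\star$ rooted at $w^\star$ of size $n^\star = n - d$. The non-$v$ pairs at distance at most two split into four categories: (i) the edges $\{w, w_i\}$; (ii) the distance-two pairs $\{w, z\}$ with $z$ two edges away; (iii) the distance-two pairs $\{w_i, w_j\}$ through $w$, which always contribute $0$ because $w$ is equidistant from its neighbors; and (iv) the pairs lying entirely inside $B^\star$. An elementary case analysis handles (i)--(iii): each of the $d - 2$ singleton leaf-branches contributes $+1$ to $B$ from its edge at $w$, and the only possible $-1$ in these three categories comes from the pair $\{w, w^\star\}$, which is negative precisely when $n^\star > (n-1)/2$.

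The hard part will be closing the remaining gap of $2$ in the ``large inner branch'' case $n^\star > (n-1)/2$, where (i)--(iii) alone give only $d - 3$, by accounting for the contributions from category (iv). For each pair in $B^\star$ at distance at most two, the sign of its change is controlled by the branch structure of $B^\star$ at $w^\star$ together with the small complement of size $n - 1 - n^\star = d - 1$, and I would argue via a direct counting inside $B^\star$---using that the diameter choice forces $B^\star$ to be rooted at $w^\star$ along a long path---that the net surplus of the category-(iv) contributions is at least $2$. Combining the $v$-pair contribution $2n - 3 - d$ with the lower bound $B \geq d - 1$ gives the required decrease $2(n-2)$, completing the induction.
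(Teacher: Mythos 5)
Your reduction is set up correctly: for a leaf $v$ with neighbor $w$ of degree $d$, the pairs containing $v$ contribute exactly $2n-3-d$, every other pair at distance at most two changes by $+1$, $-1$, or $0$ according to the rule you state, and the inductive step is indeed equivalent to showing that the signed count $B$ is at least $d-1$. This is a genuinely different route from the paper, which does not use induction at all (it takes $T$ minimizing ${\rm uB}_2$, computes ${\rm uB}_2$ explicitly for paths and spiders, and rules out two branch vertices by a path-merging modification). However, your proposal has a concrete error and a genuine gap. The error: it is not true that the only possible $-1$ among categories (i)--(iii) comes from the pair $\{w,w^\star\}$. A category (ii) pair $\{w,z\}$, with $z$ a neighbor of $w^\star$ inside $B^\star$, contributes $-1$ whenever the component of $T'-w^\star$ containing $z$ has more than $d-1$ vertices. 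This already happens in the tight example $T=P_n$, $n\ge 5$: with $v=u_0$, $w=u_1$, the pair $\{u_1,u_3\}$ contributes $-1$, so (i)--(iii) sum to $d-4=-2$ rather than your claimed $d-3=-1$, and category (iv) must supply $3$, not $2$. In general $w^\star$ can have several large branches, each costing an extra $-1$ in category (ii), so the deficit that category (iv) must cover is not a fixed constant $2$ but depends on the branch structure of $B^\star$ at $w^\star$.

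The gap: the entire burden of the proof now rests on the category (iv) estimate, and you only assert it ("I would argue via a direct counting \dots that the net surplus \dots is at least $2$"). The sign of the change for a pair $\{x,y\}$ deep inside $B^\star$ depends on comparing $n_{T'}(x,y)$ with $n_{T'}(y,x)$, i.e.\ on the global distribution of vertices of $B^\star$ around the $x$--$y$ path, and the diameter condition on $v$ gives essentially no control over this (it only constrains depths, not sizes). In the easy case $n^\star\le\frac{n-1}{2}$ one can in fact show every category (iv) pair contributes nonnegatively, because the side of the pair closer to $w$ always contains the $n-n^\star$ vertices outside $B^\star$; but in the case $n^\star>\frac{n-1}{2}$, which is exactly where the work lies, no such uniform statement holds and you would need a quantitative lower bound on the (iv)-surplus that grows with the number of large branches at $w^\star$. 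Until that estimate is formulated and proved, the induction does not close; as it stands, the proposal proves the statement only in the case $n^\star\le\frac{n-1}{2}$.
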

Before proving this lemma, we show that Theorem \ref{theorem1} is an immediate consequence.

\begin{proof}[Proof of Theorem \ref{theorem1}]
By definition and Lemma \ref{lemma1},
${\rm uB}(T)\geq {\rm uB}_2(T)\geq (n-1)(n-2)$
for every tree $T$ of order $n$.
It is an easy calculation that stars and $P_4$ 
satisfy ${\rm uB}(T)=(n-1)(n-2)$.
Now, in order to complete the proof, 
we suppose, for a contradiction, that $T$ is a tree of order $n$
with ${\rm uB}(T)=(n-1)(n-2)$ that is neither a star nor $P_4$.
Clearly, this implies that $n\geq 5$, and that $T$ has diameter at least three.
Since ${\rm uB}(T)=(n-1)(n-2)$ implies ${\rm uB}(T)={\rm uB}_2(T)$,
we have $n_T(u,v)=n_T(v,u)$ for every two vertices $u$ and $v$ at distance three.

Let $u$ and $v$ be two vertices at distance three.
If $u$ has a neighbor $u'$ that does not lie on the path $P$ between $u$ and $v$,
and $v'$ is the neighbor of $v$ on $P$,
then $u'$ and $v'$ have distance three but
$n_T(u',v')<n_T(u,v)=n_T(v,u)<n_T(v',u'),$
which is a contradiction.
Using $n_T(u,v)=n_T(v,u)$ 
this easily implies that $T$ arises 
from the disjoint union of two stars of order $\frac{n}{2}$
by adding an edge between the two center vertices.

Now,
\begin{eqnarray*} 
{\rm uB}(T) & \geq & {\rm uB}_2(T)\\
&=& (n-2)^2+2\left(\frac{n}{2}-1\right)^2\\
&=& (n-1)(n-2)+\frac{1}{2}(n-2)(n-4)\\
&>& (n-1)(n-2),
\end{eqnarray*} 
which is a contradiction, and completes the proof.
\end{proof}
We proceed to the proof of the lemma.
\begin{proof}[Proof of Lemma \ref{lemma1}]
Choose the tree $T$ of order $n$ such that ${\rm uB}_2(T)$ is as small as possible.
If $T$ is a path, then a simple calculation yields ${\rm uB}_2(T)=(n-1)(n-2)$, 
and the desired result follows.
Hence, we may assume that $T$ has at least one vertex of degree at least three.

We consider different cases.

\medskip

\noindent {\bf Case 1} {\it $T$ has exactly one vertex $c$ of degree $k$ at least three.}

\medskip

\noindent Let the $k$ components of $T-c$ have orders $n_1,\ldots,n_k$ 
with $n_1\geq \ldots\geq n_k\geq 1$.
Note that all these components are paths, and that $n_1+\cdots+n_k=n-1$.

\medskip

\noindent {\bf Case 1.1} {\it $n_1\leq \frac{n}{2}$.}

\medskip

\noindent We have 
\begin{eqnarray*}
{\rm uB}_2(T) 
& = & \sum\limits_{i=1}^k\Big((n-2)+(n-3)+\cdots+(n-2n_i)\Big)+\sum\limits_{i=1}^{k-1}\sum\limits_{j=i+1}^{k}(n_i-n_j)\\
& \geq & \sum\limits_{i=1}^k\Big((n-2)+(n-3)+\cdots+(n-2n_i)\Big)+(n_1-n_2)\\
& = & \sum\limits_{i=1}^k\Big((2n_i-1)n-n_i(2n_i+1)+1\Big)+(n_1-n_2)\\
& = & f_1(n,k)-\sum\limits_{i=1}^k2n_i^2+(n_1-n_2),
\end{eqnarray*}
where $f_1(n,k)$ is a suitable function of $n$ and $k$.

We consider the following optimization problem:
\begin{eqnarray}\label{e1}
\begin{array}{rrcl}
\min & f_1(n,k)-\sum\limits_{i=1}^k2n_i^2+(n_1-n_2) & & \\[3mm]
s.th. & \frac{n}{2}\geq n_1\geq \ldots\geq n_k&\geq &1 \\
	   & n_1+\cdots+n_k&=&n-1 \\
	   & n_1,\ldots,n_k & \in& \frac{\mathbb{N}}{2}.
\end{array}
\end{eqnarray}
Note that in (\ref{e1}), 
the originally integral values of the $n_i$ 
have been relaxed to being half-integral.

Let $(n_1,\ldots,n_k)$ be a lexicographically maximal optimal solution of (\ref{e1}).

If $n_1<\frac{n}{2}$ and $n_2>n_3$, then
\begin{eqnarray*}
\Bigg(-2\left(n_1+\frac{1}{2}\right)^2-2\left(n_2-\frac{1}{2}\right)^2+\left(n_1+\frac{1}{2}\right)-\left(n_2-\frac{1}{2}\right)\Bigg)
&-&\Bigg(-2n_1^2-2n_2^2+n_1-n_2\Bigg)\\
&=&-2(n_1-n_2)\\
&\leq &0
\end{eqnarray*}
implies that $\left(n_1+\frac{1}{2},n_2-\frac{1}{2},\ldots,n_k\right)$ 
is a lexicographically larger optimal solution of (\ref{e1}),
which is a contradiction.
If $n_1<\frac{n}{2}$, $n_i>1$ for some $i\in \{ 3,\ldots,k\}$, and $i$ is chosen largest with this property, then
\begin{eqnarray*}
\Bigg(-2\left(n_1+\frac{1}{2}\right)^2-2\left(n_i-\frac{1}{2}\right)^2
+\left(n_1+\frac{1}{2}\right)\Bigg)
&-&\Bigg(-2n_1^2-2n_i^2+n_1\Bigg)\\
&=&-2(n_1-n_i)-\frac{1}{2}\\
&<&0
\end{eqnarray*}
implies that $\left(n_1+\frac{1}{2},\ldots,n_i-\frac{1}{2},\ldots,n_k\right)$ 
is a better solution of (\ref{e1}),
which is a contradiction.

Finally, if $n_1=\frac{n}{2}$ and $n_2<\frac{n}{2}-k+1$, 
then $n_i>1$ for some $i\in \{ 3,\ldots,k\}$.
If $i$ is largest with this property, then
\begin{eqnarray*}
\Bigg(-2\left(n_2+\frac{1}{2}\right)^2-2\left(n_i-\frac{1}{2}\right)^2-\left(n_2+\frac{1}{2}\right)\Bigg)
&-&\Bigg(-2n_2^2-2n_i^2-n_2\Bigg)\\
&=&-2(n_2-n_i)-\frac{3}{2}\\
&<&0
\end{eqnarray*}
implies that $\left(n_1,n_2+\frac{1}{2},\ldots,n_i-\frac{1}{2},\ldots,n_k\right)$ 
is a better solution of (\ref{e1}),
which is a contradiction.

These observations imply that 
\begin{enumerate}[(a)]
\item 
either $n\geq 2k$,
$n_1=\frac{n}{2}$,
$n_2=\frac{n}{2}-k+1$, and
$n_3=\ldots=n_k=1$, 
\item 
or $n<2k$,
$n_1=n-k$, and
$n_2=\ldots=n_k=1$.
\end{enumerate}
In the first case,
\begin{eqnarray*}
{\rm uB}_2(T)
& \geq & 
\sum\limits_{i=1}^k\Big((2n_i-1)n-n_i(2n_i+1)+1\Big)+n_1-n_2\\
&\stackrel{(a)}{=}& (n-1)(n-2)+(n-2k)(k-2)\\
&\geq & (n-1)(n-2),
\end{eqnarray*}
and, in the second case,
\begin{eqnarray*}
{\rm uB}_2(T)
& \geq & 
\sum\limits_{i=1}^k\Big((2n_i-1)n-n_i(2n_i+1)+1\Big)+n_1-n_2\\
&\stackrel{(b)}{=}& (n-1)(n-2)+(2k-n)(n-k-1)\\
&\geq & (n-1)(n-2).
\end{eqnarray*}
Altogether, we obtain ${\rm uB}_2(T)\geq (n-1)(n-2)$ as required in both cases.

\medskip

\noindent {\bf Case 1.2} {\it $n_1>\frac{n}{2}$.}

\medskip

\noindent We have 
\begin{eqnarray*}
{\rm uB}_2(T) 
& = & 
\Big((n-2)+\cdots+1+0+1+\ldots+(2n_1-n)\Big)
+\sum\limits_{i=2}^k\Big((n-2)+\cdots+(n-2n_i)\Big)\\
&&+\sum\limits_{i=1}^{k-1}\sum\limits_{j=i+1}^{k}(n_i-n_j)\\
& \geq & 
\Big((n-2)+\cdots+1+0+1+\ldots+(2n_1-n)\Big)
+\sum\limits_{i=2}^k\Big((n-2)+\cdots+(n-2n_i)\Big)\\
&+&\sum\limits_{i=2}^{k}(n_1-n_i)\\
& = & 
\frac{1}{2}(n-1)(n-2)+\frac{1}{2}(2n_1-n)(2n_1-n+1)+(k-1)n_1\\
&&+\sum\limits_{i=2}^k\Big((2n_i-1)n-n_i(2n_i+1)+1-n_i\Big)\\
& = & f_2(n,k)
+2n_1^2-n_1(4n-k)-\sum\limits_{i=2}^k(2n_i^2+2n_i),
\end{eqnarray*}
where 
we used 
$\sum\limits_{i=2}^k 2n_in=(2(n-1)-2n_1)n$,
and $f_2(n,k)$ is a suitable function of $n$ and $k$.

Note that, for $i\in \{ 2,\ldots,k\}$,
we have $n_1+n_i\leq n_1+n_2\leq n-k+1$, and, hence,
$$4(n_1+n_i)-4n+k+2\leq -3k+6<0.$$
If $n_i>1$ for some $i\in\{ 2,\ldots,k\}$, and $i$ is largest with this property, then 
\begin{eqnarray*}
\Bigg(2(n_1+1)^2-(n_1+1)(4n-k)-2(n_i-1)^2-2(n_i-1)\Bigg)
&-&\Bigg(2n_1^2-n_1(4n-k)-2n_i^2-2n_i\Bigg)\\
&=&4(n_1+n_i)-4n+k+2\\
&<&0.
\end{eqnarray*}
This observation implies that 
$$
\begin{array}{rrcl}
\min & f_2(n,k)+2n_1^2-n_1(4n-k)-\sum\limits_{i=2}^k(2n_i^2+2n_i) & & \\[3mm]
s.th. & n_1&>&\frac{n}{2}\\
&n_1\geq \ldots\geq n_k&\geq &1 \\
	   & n_1+\cdots+n_k&=&n-1 \\
	   & n_1,\ldots,n_k & \in& \mathbb{N}
\end{array}
$$
is assumed 
\begin{enumerate}[(c)]
\item 
for $n_1=n-k$ and $n_2=\ldots=n_k=1$.
\end{enumerate}
This implies
\begin{eqnarray*}
{\rm uB}_2(T) 
& \geq & 
\frac{1}{2}(n-1)(n-2)+\frac{1}{2}(2n_1-n)(2n_1-n+1)+(k-1)n_1\\
&&+\sum\limits_{i=2}^k\Big((2n_i-1)n-n_i(2n_i+1)+1-n_i\Big)\\
& \stackrel{(c)}{\geq} & (n-1)(n-2)+(k-1)(k-2)\\ 
& \geq & (n-1)(n-2),
\end{eqnarray*}
and, hence, 
also ${\rm uB}_2(T)\geq (n-1)(n-2)$ as required in this case.

\medskip

\noindent {\bf Case 2} {\it $T$ has at least two vertices of degree at least three.}

\medskip

\noindent Considering two vertices of degree at least three at maximum distance,
it follows that $T$ has a vertex $c$ of degree $k+1$ at least three
such that $T-c$ has 
\begin{itemize}
\item $k$ components that are paths of orders 
$n_1,\ldots,n_k$ with
$n_1\geq \ldots \geq n_k\geq 1$
and 
$$n':=1+n_1+\ldots+n_k\leq \frac{n}{2},$$
as well as 
\item one component $K$ of order $n-n'$.
\end{itemize}
Let $d$ be the neighbor of $c$ in $V(K)$.
Let the tree $T'$ arise from the disjoint union of $K$ 
and a path $P$ of order $n'$ by adding one edge between $d$ 
and an endvertex of $P$.
Our goal is to show that ${\rm uB}_2(T)>{\rm uB}_2(T')$,
which would contradict the choice of $T$, and complete the proof.

We have 
\begin{eqnarray*}
{\rm uB}_2(T)-{\rm uB}_2(T') 
& = & 
\sum\limits_{i=1}^k\Big((n-2)+\cdots+(n-2n_i)+(n-n')-n_i\Big)
+\sum\limits_{i=1}^{k-1}\sum\limits_{j=i+1}^{k}(n_i-n_j)\\
&& -\Big((n-2)+\cdots+(n-(2n'-1))\Big)\\
& \geq & 
\sum\limits_{i=1}^k\Big((n-2)+\cdots+(n-2n_i)+(n-n')-n_i\Big)\\
&& -\Big((n-2)+\cdots+(n-(2n'-1))\Big)\\
&=& 
\sum\limits_{i=1}^k\Big((2n_i-1)n-n_i(2n_i+1)+1+(n-n')-n_i\Big)\\
&&-\Big((2n'-2)n-n'(2n'-1)+1\Big)\\
&=& f_3(n,n',k)-\sum\limits_{i=1}^k2n_i^2,
\end{eqnarray*}
where $f_3(n,n',k)$ is a suitable function of $n$, $n'$, and $k$.

By the convexity of $x\mapsto x^2$, 
$$
\begin{array}{rrcl}
\min & f_3(n,n',k)-\sum\limits_{i=1}^k2n_i^2 & & \\[3mm]
s.th. & n_1\geq \ldots\geq n_k&\geq &1 \\
	   & n_1+\cdots+n_k&=&n'-1 \\
	   & n_1,\ldots,n_k & \in& \mathbb{N}
\end{array}
$$
is assumed 
\begin{enumerate}[(d)]
\item 
for $n_1=n'-k$ and $n_2=\ldots=n_k=1$.
\end{enumerate}
Note that $3n'=2n'+n'\geq 2(k+1)+3\geq 2k+5$.

Now, we obtain
\begin{eqnarray*}
{\rm uB}_2(T)-{\rm uB}_2(T') 
& \geq &
\sum\limits_{i=1}^k\Big((2n_i-1)n-n_i(2n_i+1)+1+(n-n')-n_i\Big)\\
&&-\Big((2n'-2)n-n'(2n'-1)+1\Big)\\
& \stackrel{(d)}{\geq} & (3n'-2k-3)(k-1)\\
& \stackrel{k\geq 2}{>} & 0,
\end{eqnarray*}
which is the desired contradiction,
completing the proof.
\end{proof}

\end{document}